\newtheorem{lemma}{Lemma}
\newcommand{\be}{\begin{equation}}
\newcommand{\ee}{\end{equation}}
\newcommand{\bea}{\begin{eqnarray}}
\newcommand{\eea}{\end{eqnarray}}
\def\XXint#1#2#3{{\setbox0=\hbox{$#1{#2#3}{\int}$ }
\vcenter{\hbox{$#2#3$ }}\kern-.6\wd0}}
\begin{document}

\title[Threefold with zero real bisectional curvature]{Hermitian threefolds with vanishing real bisectional curvature}

%\author{Yulu Li}
%\address{Yulu Li. School of Mathematical Sciences, Chongqing Normal University, Chongqing 401331, China}
%\email{{1320779072@qq.com}}

\author{Wu Zhou}
\address{Wu Zhou. School of Mathematics, Southwest Minzu University, 16, South Section, 1st Ring Road, WuHou District, Chengdu, Sichuan 610041, China}
\email{{465840124@qq.com}}

\author{Fangyang Zheng} \thanks{The research is partially supported by NSFC grant \# 12071050 and Chongqing Normal University.}
\address{Fangyang Zheng. School of Mathematical Sciences, Chongqing Normal University, Chongqing 401331, China}
\email{{20190045@cqnu.edu.cn}}

\subjclass[2010]{53C55 (primary), 53C05 (secondary)}
\keywords{Hermitian manifold; Chern connection; holomorphic sectional curvature; real bisectional curvature}

\begin{abstract}
We examine the class of compact Hermitian manifolds with constant holomorphic sectional curvature. Such manifolds are conjectured to be K\"ahler (hence a complex space form) when the constant is non-zero and Chern flat (hence a quotient of a complex Lie group) when the constant is zero. The conjecture is known in complex dimension two but open in higher dimensions. In this paper, we establish a partial solution in complex dimension three by proving that any compact Hermitian threefold with zero real bisectional curvature must be Chern flat. Real bisectional curvature is a curvature notion introduced by Xiaokui Yang and the second named author in 2019, generalizing holomorphic sectional curvature. It is equivalent to the latter in the K\"ahler case and is slightly stronger in general. \end{abstract}

\maketitle

%\tableofcontents

\markleft{Zhou and Zheng}
\markright{Threefold with zero real bisectional curvature}

\section{Introduction and statement of result}

A long-standing problem in complex differential geometry is the classification of compact Hermitian manifolds with constant holomorphic sectional curvature. For K\"ahler metrics,  it is well known that holomorphic sectional curvature determines the entire curvature tensor, and complete K\"ahler manifolds with constant holomorphic sectional curvature are complex space forms, namely, quotients of either ${\mathbb C}{\mathbb P}^n$, or ${\mathbb C}^n$, or the complex hyperbolic space ${\mathbb C}{\mathbb H}^n$, equipped with (scaling of) the standard metrics.

When the metric is non-K\"ahler, however, the curvature tensor no longer obeys all the symmetries as in the K\"ahler case, thus the holomorphic sectional curvature does not determine the whole curvature tensor in general. Nevertheless, the following conjecture is widely believed to be true in the field:

\vspace{0.3cm}

\noindent {\bf Conjecture 1.} {\em Suppose $(M^n,g)$ is a compact Hermitian manifold whose holomorphic sectional curvature $H$ is equal to a constant $c$. Then $g$ is K\"ahler when $c\neq 0$ and $g$ is Chern flat when $c=0$.}

\vspace{0.3cm}

The conjecture is known to be true when $n=2$, by Balas and Gauduchon \cite{BG} in 1985 for the  $c\leq 0$ case (see also \cite{Gauduchon} and \cite{Balas} for earlier work), and by Apostolov, Davidov and Muskarov \cite{ADM} in 1996 for the $c>0$ case. For $n\geq 3$, the conjecture is still wide open, though some partial results are known. For instance, Kai Tang in \cite{Tang} proved the conjecture under the additional assumption that the metric is Chern K\"ahler-like, meaning that the curvature tensor of the Chern connection obeys all the K\"ahler symmetries. An interesting new development was obtained by Chen-Chen-Nie \cite{CCN} in which they established the conjecture for the case $c\leq 0$ under the additional assumption that $g$ is locally conformally K\"ahler. They also pointed out the necessity of the compactness assumption in the conjecture by explicit examples.

Note that compact Chern flat manifolds were classified by Boothby \cite{Boothby} in 1958. They are compact quotients of complex Lie groups equipped with left invariant metrics. When $n\geq 3$, such manifolds can be non-K\"ahler.

For the conjecture, an important special case is when the manifold  is {\em balanced,} meaning that the K\"ahler form $\omega$ of $g$ is co-closed (namely,  $d (\omega^{n-1})=0$ where $n$ is the dimension). This case already contains a lot of the interesting non-K\"ahler manifolds, even in complex dimension three, as it includes all the twistor spaces for instance. So far the conjecture is still largely unknown even for this special case, and the main purpose of this article is establish a partial answer to it, by assuming that the Hermitian threefold $(M^3,g)$ has vanishing real bisectional curvature, thus providing a partial supporting evidence to the conjecture.

Let $(M^n,g)$ be a compact Hermitian manifold. Denote by $R$ the curvature tensor of the Chern connection. The holomorphic sectional curvature $H$ is defined by
$$ H(X) = \frac{R_{X\overline{X}X\overline{X}} } {|X|^4}  $$
where $X$ is any non-zero complex tangent vector of $(1,0)$ type. In \cite{YZ}, Xiaokui Yang and the second named author introduced the notion of real bisectional curvature $B$ for Hermitian metric in the following way. For $p\in M$, let $e= \{ e_1, \ldots , e_n\}$ be a unitary tangent frame at $p$, and let $a=\{ a_1, \ldots , a_n\} $ be non-negative constants with $|a|^2 = a_1^2 + \cdots + a_n^2 > 0$. Define the {\em real bisectional curvature} of $g$ by
$$ B_g(e,a)=\frac{1}{|a|^2} \sum_{i, j=1}^n R_{i\overline{i}j\overline{j}} a_ia_j.$$
We say that $B>0$ at $p$ if $ B_g(e,a)>0$ for any choice of unitary tangent frame $e$ at $p$ and any choice of $a$ as above. One can define $B\geq 0, \, =0, \, < 0$, or $\leq 0$ similarly. The notion $B$ is a natural extension of $H$ from the point of view of Berger's Lemma. When the metric $g$ is K\"ahler, $B>0  \Longleftrightarrow H>0 $. Similarly, $B=(<,\, \leq ,\, \geq )\,0 \Longleftrightarrow H=( <, \,\leq , \,\geq )\,0$. In other words, $B$ and $H$ have the same signs.

When the metric $g$ is non-K\"ahler, $B$ is a  stronger notion than $H$ in general, namely, $B>0$ implies $H>0$ but not the other way around. There are local examples constructed in \cite{YZ} which illustrate the difference, although global examples are difficult to construct in general.

A Hermitian manifold $(M^n,g)$ is said to have constant real bisectional curvature, denoted by $B=c$ where $c$ is a constant, if $B_g(e,a)=c|a|^2$ for any $p\in M$ and any choice of $e$ and $a$ as above. The following conjecture was raised in \cite[Conjecture 1.5]{YZ} (note that there was a misprint there):

\vspace{0.3cm}

\noindent {\bf Conjecture 2 (\cite{YZ}).} {\em Suppose $(M^n,g)$ is a compact Hermitian manifold with constant real bisectional curvature $B=c$.  Then $c$ must be $0$ and  $g$ is Chern flat.}

\vspace{0.3cm}

The result \cite[Theorem 1.4]{YZ} states that $c>0$ cannot occur, and if $c=0$, then $g$ is balanced, and its Chern curvature tensor has  vanishing first, second, third Ricci, and satisfies
\begin{equation} \label{eq:1.1}
R_{X\overline{Y}Z\overline{W}} + R_{Z\overline{W}X\overline{Y}}=0
\end{equation}
for any type $(1,0)$ tangent vectors $X$, $Y$, $Z$, $W$. So for Conjecture 2 with $n\geq 3$, one still needs to show that $c<0$ cannot occur, and when $c=0$, the Chern curvature tensor $R$ must be zero.

The main result of this article is to prove the following:

\vspace{0.3cm}

\noindent {\bf Theorem 1.} {\em Let $(M^3,g)$ be a compact Hermitian threefold with vanishing real bisectional curvature. Then  $g$ must be Chern flat.}

\vspace{0.3cm}

In other words, we will show that if a compact Hermitian manifold $(M^3,g)$ satisfies the curvature condition (\ref{eq:1.1}), then it must be Chern flat. The proof is based on utilizing the first and second Bianchi identities for this special kind of Hermitian manifolds, Boothby's Bochner lemma computation, and an observation about a particular form of unitary tangent frame on balanced threefolds.

The last point is a technical result for balanced threefold, which is interesting in its own right and might have other applications. So we state it as a proposition:

 \vspace{0.3cm}

\noindent {\bf Proposition 2.} {\em Let $(M^3,g)$ be a balanced Hermitian threefold. Then for any $p\in M^3$, there exists a unitary frame of type $(1,0)$ tangent vectors in a neighborhood of $p$ such that $T^1_{1k}=T^2_{2k}=T^3_{3k}=0$ for any $1\leq k\leq 3$. Here $T^j_{ik}$ are the components under the frame of the torsion tensor of the Chern connection.}

\vspace{0.3cm}

The article is organized as follows. In the next section, we will collect some known results from existing literature, set up the notations and prove some preliminary results. In the third section we will prove Proposition 2 and Theorem 1.

\vspace{0.3cm}

\section{Preliminaries}

Let $(M^n,g)$ be a Hermitian manifold. Denote by $J$ the almost complex structure and by $g=\langle \, , \, \rangle$ the metric, extended bilinearly over ${\mathbb C}$. Let $\nabla$ be the Chern connection. Its torsion and curvature are denoted by $T$ and $R$, respectively:
$$ T(x,y)=\nabla_xy - \nabla_yx - [x,y], \ \ \ R(x,y,z,w) = \langle R_{xy}z,\  w\rangle= \langle \nabla_x\nabla_yz -\nabla_x\nabla_yz -\nabla_{[x,y]}\,z,\  w\rangle $$
for any  tangent vector fields $x$, $y$, $z$, $w$ on $M$. We will extend $T$ and $R$ linearly over ${\mathbb C}$, and still denote them by the same letters. From now on, we will use $X$, $Y$, $Z$, $W$ to denote complex tangent vectors of type $(1,0)$, namely, $X=x-\sqrt{-1}Jx\,$ for some real tangent vector $x$. It is well known that the torsion and curvature of the Chern connection enjoy the following properties:
\begin{equation} \label{eq:1.2}
T(X, \overline{Y})=0, \ \ \ R(X,Y,\ast , \ast) = R(\ast , \ast , Z, W)=0.
\end{equation}
So the only possibly non-zero components of $R$ are $R(X, \overline{Y}, Z, \overline{W})$, which we will denote by $R_{X \overline{Y} Z \overline{W}}$ for convenience. Let $e=\{ e_1, \ldots , e_n\}$ be a local unitary frame of type $(1,0)$ tangent vectors. Following \cite{YangZ}, we will denote the components of $T$ by
\begin{equation} \label{eq:1.3}
T(e_i, e_k)= \sum_{j=1}^n T_{ik}^j e_j.
\end{equation}
Note that our $T^j_{ik}$ here is equal to twice of that in \cite{YangZ}, where the peculiar notation was for the convenience of some formulas in \cite{YangZ}. The first Bianchi identity leads to the following (see for instance \cite[Lemma 7]{YangZ}):
\begin{equation} \label{eq:B1}
R_{k\overline{j}i\overline{\ell}} - R_{i\overline{j}k\overline{\ell}} =  T^{\ell}_{ik,\,\overline{j}}
\end{equation}
 for any $1\leq i,j,k,\ell \leq n$, where the index after comma stands for covariant derivative under $\nabla$. Also, by the second Bianchi identity, which for any connection is in the form
 $$ {\mathfrak S}\{ (\nabla_xR)_{yz} + R_{T(x,y)\, z} \} = 0 ,$$
 where ${\mathfrak S}$ means the sum over all cyclic permutation of $x,y,z$, and when applied to the special case of Chern connection $\nabla$, we get
 \begin{equation} \label{eq:B2}
R_{k\overline{j}i\overline{\ell},\, m} - R_{m\overline{j}k\overline{\ell}, \,i} \,= \, \sum_{r=1}^n T^{r}_{im} R_{r\overline{j}i\overline{\ell}}
\end{equation}
 for any $1\leq i,j,k,\ell , m\leq n$, where the indices after comma again stand for covariant derivatives under $\nabla$. For our later proofs, we will also need the following two commutation formulas:
 \begin{eqnarray}
 T^k_{j\ell,\,\overline{i}\overline{m}} - T^k_{j\ell,\,\overline{m}\overline{i}} & = &  \sum_{r=1}^n \overline{T^r_{im }}\, T^k_{j\ell , \,\overline{r}} \label{eq:C1}  \\
 T^j_{ik,\,\overline{\ell}m} - T^j_{ik,\,m\overline{\ell }} & = &  \sum_{r=1}^n \big(T^r_{ik}R_{m\overline{\ell} r\overline{j}} - T^j_{rk}R_{m\overline{\ell} i\overline{r}} - T^j_{ir}R_{m\overline{\ell} k\overline{r}} \big) \label{eq:C2}
 \end{eqnarray}
for any indices $i$, $j$, $k$, $\ell$, $m$. At any fixed point $p\in M$, we can choose a local unitary frame $e$ so that the connection matrix $\theta$ for $\nabla$ vanishes at $p$. Using this frame, a straight forward computation will lead to the above two formulas, so we omit the computations here.

Next, recall that the first, second and third Ricci curvature tensors of the Chern connection are defined by
$$
\rho^{(1)}_{i\overline{j}} = \sum_{r=1}^n R_{i\overline{j}r\overline{r}}, \ \ \ \ \rho^{(2)}_{i\overline{j}} = \sum_{r=1}^n R_{r\overline{r} i\overline{j}}, \ \ \ \ \rho^{(3)}_{i\overline{j}} = \sum_{r=1}^n R_{r\overline{j}i\overline{r}}
$$
under any unitary frame  $e$. Letting $m=\ell$ and sum over in (\ref{eq:C2}), we get
\begin{equation}
\label{eq:C3}
\sum_{\ell =1}^n \big( T^j_{ik,\overline{\ell}\ell } - T^j_{ik,\ell \overline{\ell }}\big)  =  \sum_{r=1}^n \big(T^r_{ik}\rho^{(2)}_{ r\overline{j}} - T^j_{rk}\rho^{(2)}_{ i\overline{r}} - T^j_{ir}\rho^{(2)}_{ k\overline{r}} \big)
\end{equation}

Let us denote by $\{ \varphi_1, \ldots , \varphi_n\}$ the unitary coframe of local $(1,0)$-forms dual to $e$. Gauduchon's torsion $1$-form $\eta$ (see \cite{Gauduchon1}) is the global $(1,0)$-form on $M^n$ defined by the equation
\begin{equation} \label{eq:eta}
\partial (\omega ^{n-1}) = - \eta \wedge \omega^{n-1}
\end{equation}
where $\omega$ is the K\"ahler form of the Hermitian metric $g$. Under the local frame $e$ and coframe $\varphi$, we have \begin{equation}
\eta = \sum_{i=1}^n \eta_i \varphi_i, \ \ \ \ \ \ \mbox{where} \ \ \ \
 \eta_i = \sum_{r=1}^n T^r_{ri}.
 \end{equation}
Again note that our $\eta$ is equal to twice of $\eta$ in \cite{YangZ}, just like $T^j_{ik}$.

\vspace{0.3cm}

\section{Proof of Theorem 1}

In this section, we will prove Theorem 1 stated in the first section. Let $(M^n,g)$ be a Hermitian manifold with real bisectional curvature $B$ equal to a constant $c$, namely, for any unitary frame $e$ and any nonnegative constants $a=\{ a_1, \ldots , a_n\} $ with $|a|^2=|a_1|^2 + \cdots +|a_n|^2 > 0$, it holds that $B(e,a)= \sum_{i,j=1}^n R_{i\overline{i}j\overline{j}}a_ia_j  = c|a|^2$. In this case, it was proved in \cite[(3.1)]{YZ} that the Chern curvature tensor $R$ satisfies
\begin{equation} \label{eq:3.1}
R_{i\overline{j}k\overline{\ell}} + R_{k\overline{\ell}i\overline{j}} =2c \,\delta_{i\ell}\delta_{kj}
\end{equation}
for any indices $1\leq i,j,k,\ell\leq n$ under a unitary frame $e$. From this, we will deduce the following

\vspace{0.3cm}

\begin{lemma} Suppose  $(M^n,g)$ is a Hermitian manifold with real bisectional curvature $B$ equals to a constant $c$. Then under any unitary local frame $e$, the components of the torsion and curvature satisfy the following
\begin{eqnarray}
 T^{\ell}_{ik, \overline{j}} - T^{j}_{ik, \overline{\ell}} & = &  \, 2c \,( \delta_{ij} \delta_{k\ell} - \delta_{i\ell } \delta_{kj})   \label{eq:3.2} \\
 2 R_{i\overline{j}k\overline{\ell}} \ \ & = & \, 2c \,\delta_{ij} \delta_{k\ell } - T^{\ell }_{ik, \overline{j}} - \overline{T^k_{j\ell , \overline{i} } } \label{eq:3.3}
\end{eqnarray}
where indices after comma stands for covariant derivatives with respect to the Chern connection $\nabla$.
\end{lemma}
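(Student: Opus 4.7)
The plan is to derive both identities as direct algebraic consequences of equation (\ref{eq:3.1}) combined with the first Bianchi identity (\ref{eq:B1}); no deeper machinery is required, so the only real obstacle is bookkeeping on indices (especially when taking conjugates).

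For (\ref{eq:3.2}), the idea is to use (\ref{eq:B1}) twice to rewrite both sides of the desired relation as differences of curvature components, and then apply (\ref{eq:3.1}) to collapse the result. Specifically, (\ref{eq:B1}) gives $T^{\ell}_{ik,\overline{j}} = R_{k\overline{j}i\overline{\ell}} - R_{i\overline{j}k\overline{\ell}}$ and, after swapping $j\leftrightarrow \ell$, $T^{j}_{ik,\overline{\ell}} = R_{k\overline{\ell}i\overline{j}} - R_{i\overline{\ell}k\overline{j}}$. Subtracting and regrouping produces two pairs of the form $R_{\ast\overline{\ast}\ast\overline{\ast}} + R_{\ast\overline{\ast}\ast\overline{\ast}}$ to which (\ref{eq:3.1}) applies directly: one pair collapses to $2c\,\delta_{kj}\delta_{i\ell}$ and the other to $2c\,\delta_{ij}\delta_{k\ell}$, yielding (\ref{eq:3.2}).

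For (\ref{eq:3.3}), the plan is to express the two torsion derivatives on the right-hand side in terms of curvature and then isolate $R_{i\overline{j}k\overline{\ell}}$. The first term is handled by (\ref{eq:B1}) as above. For the conjugate term $\overline{T^{k}_{j\ell,\overline{i}}}$, I apply (\ref{eq:B1}) with the index substitution $(i,j,k,\ell)\mapsto (j,i,\ell,k)$ and then take the complex conjugate, using the standard identity $\overline{R_{a\overline{b}c\overline{d}}}=R_{b\overline{a}d\overline{c}}$ to obtain $\overline{T^{k}_{j\ell,\overline{i}}} = R_{i\overline{\ell}k\overline{j}} - R_{i\overline{j}k\overline{\ell}}$. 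Adding $T^{\ell}_{ik,\overline{j}}+\overline{T^{k}_{j\ell,\overline{i}}}$ gives
\[
-2R_{i\overline{j}k\overline{\ell}} + \bigl(R_{k\overline{j}i\overline{\ell}} + R_{i\overline{\ell}k\overline{j}}\bigr),
\]
and the parenthesized sum is exactly one of the instances of (\ref{eq:3.1}), equal to $2c\,\delta_{ij}\delta_{k\ell}$. Solving for $2R_{i\overline{j}k\overline{\ell}}$ yields (\ref{eq:3.3}).

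Both steps are essentially mechanical; the one place where care is needed is the conjugation step in deriving the expression for $\overline{T^{k}_{j\ell,\overline{i}}}$, since an error in the index substitution would produce the wrong Kronecker pattern on the right-hand side. Once that is done correctly, the two identities drop out without further input.
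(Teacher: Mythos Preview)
Your proof is correct and follows essentially the same route as the paper: both identities are obtained by combining the first Bianchi identity (\ref{eq:B1}) (and its conjugate/index-swapped variants) with the symmetry relation (\ref{eq:3.1}). Your derivation of (\ref{eq:3.3}) is in fact marginally more direct than the paper's, since you add $T^{\ell}_{ik,\overline{j}}$ and $\overline{T^{k}_{j\ell,\overline{i}}}$ immediately instead of first computing $T^{j}_{ik,\overline{\ell}} - \overline{T^{k}_{j\ell,\overline{i}}}$ and then swapping $j\leftrightarrow \ell$, but the substance is identical.
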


\begin{proof}
We start from the formula (\ref{eq:B1}). Interchange $j$ and $\ell$ in (\ref{eq:B1}), we get
\begin{equation} \label{eq:B1a}
R_{k\overline{\ell}i\overline{j}} - R_{i\overline{\ell}k\overline{j}} =  T^{j}_{ik,\,\overline{\ell}}
\end{equation}
Subtract  (\ref{eq:B1a}) from (\ref{eq:B1}), and apply (\ref{eq:3.1}), we get
\begin{eqnarray*}
T^{\ell}_{ik,\,\overline{j}} - T^{j}_{ik,\,\overline{\ell}} & = &  R_{k\overline{j}i\overline{\ell}} -  R_{i\overline{j}k\overline{\ell}} -  R_{k\overline{\ell}k\overline{j}} +  R_{i\overline{\ell}k\overline{j}}\\
& = & (R_{k\overline{j}i\overline{\ell}}+ R_{i\overline{\ell}k\overline{j}} ) - (R_{i\overline{j}k\overline{\ell}} +  R_{k\overline{\ell}k\overline{j}})\\
& = & 2c\,(\delta_{ij} \delta_{k\ell} - \delta_{i\ell} \delta_{kj})
\end{eqnarray*}
 This establishes (\ref{eq:3.2}). Now if we interchange $i$ and $j$ and interchange $k$ and $\ell$ in (\ref{eq:B1}) and then take complex  conjugation on both sides, we get
 \begin{equation} \label{eq:B1b}
R_{i\overline{\ell}k\overline{j}} - R_{i\overline{j}k\overline{\ell}} =  \overline{T^{k}_{j\ell ,\,\overline{i}}}
\end{equation}
Taking the difference between (\ref{eq:B1a}) and (\ref{eq:B1b}), we get
\begin{eqnarray*}
T^{j}_{ik,\,\overline{\ell}} - \overline{T^{k}_{j\ell ,\,\overline{i}}} & = &  (R_{i\overline{j}k\overline{\ell}} +R_{k\overline{j}i\overline{\ell}} ) -  2 R_{i\overline{\ell}k\overline{j}} \\
& = & 2c \,\delta_{i\ell}\delta_{kj} - 2 R_{i\overline{\ell}k\overline{j}}
\end{eqnarray*}
This gives us the expression of $R_{i\overline{\ell}k\overline{j}}$. Interchange $j$ and $\ell$, we get (\ref{eq:3.3}).
\end{proof}

\vspace{0.3cm}

\begin{lemma} Suppose  $(M^n,g)$ is a compact Hermitian manifold with vanishing real bisectional curvature. Then under any unitary local frame $e$, it holds that
\begin{equation}
 \sum_{s=1}^n T^{j}_{is, \overline{s}} =0, \ \ \ \ \ \sum_{s=1}^n  T^{j}_{ik, \overline{s}s} =  \sum_{r,s=1}^n  T^r_{is}T^j_{rk,\overline{s}}.  \label{eq:3.6}
\end{equation}
for any $i$, $j$, $k$.
\end{lemma}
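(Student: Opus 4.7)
The plan is to prove the two identities separately. The first drops out quickly from $(\ref{eq:3.2})$ and balancedness; the second is more delicate and will use the second Bianchi identity, formula $(\ref{eq:3.3})$, and the commutator $(\ref{eq:C1})$, with the first identity itself entering at the end to cancel an unwanted conjugate-torsion remainder.

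\emph{First identity.} Specializing $(\ref{eq:3.2})$ with $c=0$ and $\ell=s$ gives $T^{j}_{is,\overline{s}}=T^{s}_{is,\overline{j}}$. Summing over $s$ and recalling that $B=0$ forces the metric to be balanced, so that Gauduchon's 1-form satisfies $\eta_i=\sum_s T^{s}_{si}=0$, we find $\sum_s T^{s}_{is}=-\eta_i=0$ and hence $\sum_s T^{j}_{is,\overline{s}}=(\sum_s T^{s}_{is})_{,\overline{j}}=0$.

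\emph{Second identity.} Apply the second Bianchi identity, specialize $\ell=m=s$, and sum over $s$. The contribution $\sum_s R_{s\overline{j}k\overline{s},i}$ drops out because $\sum_s R_{s\overline{j}k\overline{s}}=\rho^{(3)}_{k\overline{j}}=0$ (the third Chern Ricci vanishes when $B=0$), leaving
\[
\sum_s R_{i\overline{j}k\overline{s},s}=\sum_{r,s}T^{r}_{is}\,R_{r\overline{j}k\overline{s}}.
\]
Now substitute $(\ref{eq:3.3})$ with $c=0$ on both sides to turn curvature into first covariant derivatives of torsion, and use the symmetry $T^{s}_{ik,\overline{j}}=T^{j}_{ik,\overline{s}}$ from $(\ref{eq:3.2})$ to identify $\sum_s T^{s}_{ik,\overline{j}s}$ with $\sum_s T^{j}_{ik,\overline{s}s}$ and $\sum_{r,s}T^{r}_{is}T^{s}_{rk,\overline{j}}$ with $\sum_{r,s}T^{r}_{is}T^{j}_{rk,\overline{s}}$. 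The identity rearranges to
\[
\sum_s T^{j}_{ik,\overline{s}s}-\sum_{r,s}T^{r}_{is}T^{j}_{rk,\overline{s}}=\sum_s(\overline{T^{k}_{js,\overline{i}}})_{,s}-\sum_{r,s}T^{r}_{is}\,\overline{T^{k}_{js,\overline{r}}},
\]
so the task reduces to showing the right-hand side vanishes. Using compatibility of the Chern connection with conjugation, $(\overline{T^{k}_{js,\overline{i}}})_{,s}=\overline{T^{k}_{js,\overline{i}\overline{s}}}$; the commutator $(\ref{eq:C1})$ then gives $\sum_s T^{k}_{js,\overline{i}\overline{s}}=\sum_s T^{k}_{js,\overline{s}\overline{i}}+\sum_{r,s}\overline{T^{r}_{is}}\,T^{k}_{js,\overline{r}}$, and the first term on the right vanishes because $\sum_s T^{k}_{js,\overline{s}}=0$ by the already-proved first identity (applied with $(i,j)\mapsto (j,k)$). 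Taking complex conjugates produces exactly the desired cancellation.

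The main obstacle is purely organizational: after substituting $(\ref{eq:3.3})$ into the Bianchi contraction one is faced with four sums, and the nontrivial step is recognizing that the two $T\overline{T}$ terms are precisely the ones handled by $(\ref{eq:C1})$ plus the first identity, while the two $TT_{,\overline{\cdot}}$ terms reassemble into $T^{r}_{is}T^{j}_{rk,\overline{s}}$ after an application of $(\ref{eq:3.2})$. The vanishing Chern Ricci curvatures and the balancedness provided by $B=0$ are used crucially to make each of these reductions.
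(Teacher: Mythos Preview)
Your proof is correct. The first identity is handled exactly as in the paper. For the second identity, both arguments combine $(\ref{eq:3.3})$, the second Bianchi identity, and the commutator $(\ref{eq:C1})$, but the organization differs. The paper differentiates $(\ref{eq:3.3})$ in the direction $e_m$, antisymmetrizes in $i$ and $m$, and equates the result with the Bianchi expression $2\sum_r T^r_{im}R_{r\overline{j}k\overline{\ell}}$; after one more application of $(\ref{eq:3.3})$ the $\overline{T^k_{j\ell,\overline{\ast}}}$ terms cancel \emph{before} any contraction, producing the clean uncontracted identity
\[
T^{\ell}_{mk,\overline{j}i} - T^{\ell}_{ik,\overline{j}m} \;=\; -\sum_{r} T^r_{im}\,T^{\ell}_{rk,\overline{j}},
\]
from which the lemma follows by setting $m=j=s$ and summing, using only the first identity (no Ricci vanishing is needed). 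Your route contracts earlier, invoking $\rho^{(3)}=0$ to discard $\sum_s R_{s\overline{j}k\overline{s},\,i}$, and then must separately argue via $(\ref{eq:C1})$ together with the first identity that the surviving conjugate-torsion sums cancel. Both are valid; the paper's ordering trades the Ricci input for a stronger intermediate formula that may be of independent use, while yours is a bit more direct once the vanishing of the Chern Ricci tensors is already on the table.
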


\begin{proof}
When $c=0$, we know from \cite[Theorem 1.4]{YZ} that $\eta =0$. So in (\ref{eq:3.2}), let $k=\ell$ and sum from $1$ to $n$, we get
$$ \sum_{s=1}^n T^{j}_{is, \overline{s}} = \sum_{s=1}^n T^{s}_{is, \overline{j}} = - \eta_{i,\overline{j} } = 0.$$
On the other hand, by (\ref{eq:3.3}), we get
$$ 2 R_{i\overline{j}k\overline{\ell}, \,m}    = - T^{\ell}_{ik, \,\overline{j}m } - \overline{ T^k_{j\ell , \, \overline{i}\overline{m} }   }. $$
Interchange $i$ and $m$, and subtract the so obtained line from the above line, we get
\begin{eqnarray*}
2(R_{i\overline{j}k\overline{\ell}, \,m} - R_{m\overline{j}k\overline{\ell}, \,i} ) & = & \big(T^{\ell}_{mk,\,\overline{j}i} - T^{\ell}_{ik,\,\overline{j}m} \big) + \overline{\big( T^{k}_{j\ell ,\,\overline{m}\overline{i}} - T^{k}_{j\ell ,\,\overline{i}\overline{m}}   \big)} \\
& = & \big(T^{\ell}_{mk,\,\overline{j}i} - T^{\ell}_{ik,\,\overline{j}m} \big)  - \sum_{r=1}^n T^r_{im} \, \overline{T^k_{j\ell , \, \overline{r} } }
\end{eqnarray*}
where in the last equality we used (\ref{eq:C1}). By the second Bianchi identity (\ref{eq:B2}), the left hand side of the above long equality equals to
$$ 2 \sum_{r=1}^n T^r_{im} R_{r\overline{j}k\overline{\ell}} =  - \sum_{r=1}^n T^r_{im}\big( T^{\ell}_{rk,\,\overline{j}} +\overline{ T^k_{j\ell , \, \overline{r}}    } \big) ,$$
where the last equality is due to (\ref{eq:3.3}) and the assumption that $c=0$. Putting the two sides together, we get
\begin{equation} \label{eq:3.7}
T^{\ell}_{mk,\,\overline{j}i} - T^{\ell}_{ik,\,\overline{j}m}  =  - \sum_{r=1}^n T^r_{im} \, T^{\ell}_{rk , \, \overline{j}}
\end{equation}
Letting $m=j=s$ and sum over $s$, and use the fact that $\sum_s T^{\ell}_{is, \, \overline{s}} =0$,  we get
$$ - \sum_{s=1}^n T^{\ell}_{ik, \, \overline{s}s} = -  \sum_{r,s=1}^n T^r_{is} \, T^{\ell}_{rk , \, \overline{s}}.$$
Replace $\ell$ by $j$, we get the second equality in the lemma.
\end{proof}

\vspace{0.3cm}

\begin{lemma} Suppose  $(M^n,g)$ is a compact Hermitian manifold with vanishing real bisectional curvature. Then under any unitary local frame $e$, we have
\begin{equation} \label{eq:3.8}
  \sum_{s=1}^n  T^{j}_{ik, \,\overline{s}s} =   \sum_{s=1}^n  T^{j}_{ik, \, s\overline{s}}
\end{equation}
for any $i$, $j$, $k$.
\end{lemma}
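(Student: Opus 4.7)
The plan is to apply the commutation formula (\ref{eq:C3}) directly and then invoke the vanishing of the second Chern Ricci curvature.

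More precisely, the identity (\ref{eq:C3}) derived in the preliminaries reads
\begin{equation*}
\sum_{\ell =1}^n \big( T^j_{ik,\,\overline{\ell}\ell } - T^j_{ik,\,\ell \overline{\ell }}\big) \;=\; \sum_{r=1}^n \big( T^r_{ik}\,\rho^{(2)}_{r\overline{j}} - T^j_{rk}\,\rho^{(2)}_{i\overline{r}} - T^j_{ir}\,\rho^{(2)}_{k\overline{r}} \big),
\end{equation*}
and this holds under any unitary frame on an arbitrary Hermitian manifold, independently of the curvature hypothesis.

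Under the assumption that $B\equiv 0$, the cited result \cite[Theorem 1.4]{YZ} tells us that the first, second, and third Chern Ricci tensors all vanish. In particular $\rho^{(2)}_{r\overline{j}}=0$ for every $r,j$, so the right-hand side of the displayed identity is identically zero. Rearranging then gives $\sum_s T^j_{ik,\,\overline{s}s} = \sum_s T^j_{ik,\,s\overline{s}}$ for all $i,j,k$, which is exactly (\ref{eq:3.8}).

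So there is essentially no obstacle: the lemma is an immediate consequence of the general commutation formula together with the vanishing of $\rho^{(2)}$. The only thing worth double-checking in the write-up is that (\ref{eq:C3}) is stated with the correct sign and order of derivatives, so that the conclusion is $\sum_s T^j_{ik,\,\overline{s}s}=\sum_s T^j_{ik,\,s\overline{s}}$ rather than the opposite convention; this is a purely bookkeeping point.
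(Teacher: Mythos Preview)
Your proof is correct and matches the paper's own argument essentially verbatim: the paper simply cites \cite[Theorem 1.4]{YZ} for the vanishing of all three Chern Ricci tensors and then observes that (\ref{eq:C3}) immediately yields (\ref{eq:3.8}).
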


\begin{proof}
By \cite[Theorem 1.4]{YZ}, we know that the first, second, and third Ricci curvature of $R$ are all zero. So the lemma is implied by equality (\ref{eq:C3}).
\end{proof}

\vspace{0.3cm}

\noindent {\bf Proof of Proposition 2.} \ \ Let $(M^3,g)$ be a balanced Hermitian threefold. Let $e$ be a local unitary tangent frame. Define a $3\times 3$ matrix valued function $A$ by
$$ A_{i\alpha } = T^{\alpha}_{jk}, \ \ \ \ 1\leq i, \alpha \leq 3$$
where $(ijk)$ is a cyclic permutation of $(123)$. Since $g$ is balanced, we have $\eta_i = \sum_{s} T^s_{si}=0$ for each $i$, so
$$ A_{ij } = T^{j}_{jk} = - T^i_{ik} = T^i_{ki} = A_{ji},$$
and similarly, $A_{ik} = A_{ki}$. Hence $A$ is symmetric. Consider the semi-positive definite Hermitian matrix $A\overline{A}=AA^{\ast}$, where $A^{\ast}$ stands for conjugate transpose. There exists $U(3)$-valued function $U$ such that $U(AA^{\ast})U^{\ast}=D$ is diagonal, with nonnegative entries on the diagonal line. But this matrix is $B\overline{B}$, where $B=UA\,^t\!U$. Write $B=B_1+\sqrt{-1}B_2$ for its real and imaginary parts, we have
$$ D = B \overline{B} = \big( B_1B_1 + B_2B_2 \big)  - \sqrt{-1} [B_1, B_2] $$
Therefore $[B_1, B_2]=0$ as $D$ is real. So $B_1$ and $B_2$ are commutative real symmetric matrices, hence can be simultaneously diagonalized by orthogonal matrices, namely, there exists $O(3)$-valued function $V$ such that $VB\,^t\!V$ is diagonal. Under the new unitary frame $\tilde{e} = (VU)e$, the corresponding matrix $A$ is diagonal, which means $T^i_{i\ell}=0$ for any $i$, $\ell$. This proves Proposition 2. \qed

\vspace{0.3cm}

Now we are ready to prove Theorem 1.

\vspace{0.3cm}

\noindent {\bf Proof of Theorem 1.} \ \ Let $(M^n,g)$ be a compact Hermitian manifold with vanishing real bisectional curvature. Consider the smooth function $f\geq 0$ on $M^n$ which at any $p\in M$ is defined by $\,f=\sum_{i,j,k=1}^n  |T^j_{ik}|^2$, where $e$ is any unitary frame of type $(1,0)$ tangent vectors in a neighborhood of $p$. Following Boothby \cite{Boothby}, we want to consider the operator $Lf=\sum_{s =1}^n f_{,s \overline{s}}$, which is independent of the choice of unitary frame thus is globally defined. Here as usual the indices after comma stands for covariant differentiation with respect to the Chern connection $\nabla$. We have
\begin{eqnarray}
\sum_{s} f_{,s \overline{s}} & = & \sum_{i,j,k,s} \big( T^{j}_{ik,\,s} \,\overline{ T^{j}_{ik} } + T^{j}_{ik} \,\overline{  T^{j}_{ik,\,\overline{s}}  } \big)_{,\,\overline{s}} \nonumber \\
& = & \sum_{i,j,k,s} \big( |T^{j}_{ik, \, s} |^2 +  |T^{j}_{ik, \, \overline{s}} |^2 \big) + P  \label{eq:3.9}
\end{eqnarray}
where
\begin{eqnarray}
 P & = & \sum_{i,j,k,s} \big(   T^{j}_{ik,\,s \overline{s} } \ \overline{ T^{j}_{ik} } + T^{j}_{ik} \,\overline{  T^{j}_{ik,\,\overline{s} s }  } \big)  \nonumber   \\
& = & 2 \,{\mathcal R}e \!\sum_{i,j,k,s,r} T^r_{is} \, T^j_{rk,\,\overline{s}} \, \overline{T^j_{ik } }  \nonumber \\
& = & 2 \,{\mathcal R}e \!\sum_{i,j,k,s,r} \big( T^r_{is} \, T^j_{rk}\big) _{,\overline{s}} \, \overline{T^j_{ik } } \label{eq:3.10}
\end{eqnarray}
where the second equality is due to (\ref{eq:3.8}) and the second equation of (\ref{eq:3.6}), while the last equality is due to the first equation of (\ref{eq:3.6}).

If one can show that the function $P$ is everywhere non-negative, then $\sum_s f_{,s\overline{s}} \geq 0$. So by Bochner's lemma (cf. \cite{Boothby}), the compactness of $M^n$ would imply that  $f$ must be constant, hence by (\ref{eq:3.9}) $T^j_{ik,\,\overline{s}} =0$ for any $i$, $j$, $k$, $s$. From (\ref{eq:3.3}), we get $R=0$, namely, the Hermitian manifold  $(M^n,g)$ is Chern flat.

For general $n$, we do not know how to prove $P\geq 0$ at this time, this is why we settle for the $n=3$ case in Theorem 1.  In this case, since $g$ is balanced by \cite[Theorem 1.4]{YZ}, we know by Proposition 2 that there always exists a local unitary frame $e$ so that $T^1_{1k} = T^2_{2k} = T^3_{3k}=0$ for any $k$. In other words, the only possibly non-zero components of $T$ are $T^j_{ik}$ where  $(ijk)$ is a permutation of $(123)$. Under such a frame, for fixed distinct $i$, $j$, $k$, we have
$$ \sum_{r=1}^3 T^r_{is} \, T^j_{rk} = T^i_{is} \, T^j_{ik} = 0, $$
so by (\ref{eq:3.10}) we obtain $P=0$, which leads to $\sum_s f_{,s\overline{s}} \geq 0$, therefore $f$ is constant and $R=0$. This completes the proof of Theorem 1. \qed

\vspace{0.3cm}

\noindent\textbf{Acknowledgments.} {The second named author would like to thank his former coauthor Xiaokui Yang, who is the driving force  in forming the notion of real bisectional curvature and studying its property and related questions. He would also like to thank mathematicians  Haojie Chen, Xiaolan Nie, Kai Tang, Bo Yang, and Quanting Zhao for helpful discussions.}

\end{document}